\def\bx{{\bf x}}
\newcommand{\brac}[1]{\left(#1\right)}
\newcommand{\set}[1]{\left\{#1\right\}}
\newtheorem{theorem}{Theorem}
\newtheorem{conjecture}[theorem]{Conjecture}
\newtheorem*{theorem*}{Theorem}
\newtheorem{lemma}[theorem]{Lemma}
    \newtheoremstyle{TheoremNum}
{\topsep}{\topsep}
{\itshape}
{}
{\bfseries}
{.}
{ }
{\thmname{#1}\thmnote{ \bfseries #3}}
\theoremstyle{TheoremNum}
\newcommand{\mb}{\mathbb}
\newcommand{\mc}{\mathcal}
\newcommand{\1}{\mathbbm{1}}
\author{Tolson Bell\thanks{thbell@cmu.edu. Research supported by NSF Graduate Research Fellowship grant DGE 2140739.}~ and Alan Frieze\thanks{frieze@cmu.edu. Research supported in part by NSF grant DMS1952285.}\\Carnegie Mellon University}
\date{August 27, 2023}
\title{\vspace{-1cm}Giant Rainbow Trees in Sparse Random Graphs}
\begin{document}
\maketitle
\begin{abstract}
For any small constant $\epsilon>0$, the Erd\H os-R\'enyi random graph $G(n,\frac{1+\epsilon}{n})$ with high probability has a unique largest component which contains $(1\pm O(\epsilon))2\epsilon n$ vertices. Let $G_c(n,p)$ be obtained by assigning each edge in $G(n,p)$ a color in $[c]$ independently and uniformly. Cooley, Do, Erde, and Missethan proved that for any fixed $\alpha>0$, $G_{\alpha n}(n,\frac{1+\epsilon}{n})$ with high probability contains a rainbow tree (a tree that does not repeat colors) which covers $(1\pm O(\epsilon))\frac{\alpha}{\alpha+1}\epsilon n$ vertices, and conjectured that there is one which covers $(1\pm O(\epsilon))2\epsilon n$. In this paper, we achieve the correct leading constant and prove their conjecture correct up to a logarithmic factor in the error term, as we show that with high probability $G_{\alpha n}(n,\frac{1+\epsilon}{n})$ contains a rainbow tree which covers $(1\pm O(\epsilon\log(1/\epsilon)))2\epsilon n$ vertices.
\end{abstract}
\section{Introduction}
\subsection{The Giant Component}
In their seminal 1960 paper, Erd\H os and R\'enyi showed that for constant $\epsilon>0$, a random graph with more than $\frac{1+\epsilon}{n}\binom n2$ edges contains a unique component of size $\Theta(n)$, which they called the giant component. In particular, we know the asymptotic size of this component: 
\begin{theorem}[\cite{ErdosRenyi,bollobasrandomgraphs}]\label{giant}
Let $\epsilon>0$ constant, $m = (1+\epsilon)n/2$. Then with high probability, $G(n,m)$ consists of a unique giant component, with $(1-\frac{\mu}{1+\epsilon}\pm o(1))n$ vertices, where $\mu$ is the solution in $(0,1)$ of the equation $\mu e^{-\mu}=(1+\epsilon)e^{-(1+\epsilon)}$.
\end{theorem}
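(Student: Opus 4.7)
The plan is to prove this via the standard branching process coupling together with a sprinkling argument for uniqueness. First, I would transfer to the binomial model: since $m=(1+\epsilon)n/2$ edges is within $O(\sqrt{n\log n})$ of the expected number of edges in $G(n,p)$ with $p=(1+\epsilon)/n$, and since component sizes are monotone in the edge set, any property that holds whp in $G(n,p)$ with this $p$ (and in $G(n,p')$ for $p'=p\pm O(\sqrt{\log n/n^3})$) transfers to $G(n,m)$. So it suffices to work with $p=c/n$, $c:=1+\epsilon>1$.

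Next, explore the component $C(v)$ of a fixed vertex $v$ by breadth-first search. At each step the number of new neighbors of an active vertex, conditional on the history, is stochastically dominated by $\mathrm{Bin}(n,p)$ and dominates $\mathrm{Bin}(n-k,p)$ where $k$ is the number of already-explored vertices. Until the exploration reaches, say, $n^{2/3}$ vertices, both bounds are very close to $\mathrm{Po}(c)$, so the exploration is sandwiched between two Galton--Watson processes with Poisson$(c)$ offspring. Classical branching process theory gives the extinction probability $q\in(0,1)$ as the smallest positive solution of $q=e^{c(q-1)}$; setting $\mu=cq$ yields $\mu e^{-\mu}=ce^{-c}$ with $\mu\in(0,1)$ since $c>1$. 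Thus $\Pr(|C(v)|=\Omega(\log n))$ tends to the survival probability $1-q=1-\mu/c=1-\mu/(1+\epsilon)$, and more quantitatively, for any $\omega\to\infty$ slowly, $\Pr(\log n\ll|C(v)|\ll n^{2/3})=o(1)$ because a supercritical branching process conditioned to survive past generation $\log n$ survives forever whp. Let $X$ be the number of vertices in components of size at least $\log^2 n$. The calculation above gives $\mathbb{E}X=(1-q)n(1+o(1))$; a routine second-moment computation, using that for distinct $u,v$ the events $\{|C(u)|\ge\log^2n\}$ and $\{|C(v)|\ge\log^2n\}$ are asymptotically independent (two independent explorations barely interact before reaching size $\log^2 n$), yields $\mathrm{Var}(X)=o(n^2)$, hence $X=(1-\mu/(1+\epsilon)\pm o(1))n$ whp.

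Finally, I would prove uniqueness by sprinkling. Write $p=p_1+p_2-p_1p_2$ with $p_2=\omega(1)/n$ very small (e.g., $p_2=n^{-0.9}$) so that $G(n,p)=G(n,p_1)\cup G(n,p_2)$ with independent edge sets, and $p_1=c'/n$ with $c'>1$ still supercritical so the previous analysis applies to $G(n,p_1)$. By the above, $G(n,p_1)$ whp has at least $\Omega(n)$ vertices in components of size $\ge\log^2 n$. If two such components $A,B$ had $|A|,|B|\ge \log^2 n$, then in $G(n,p_2)$ the probability that no edge joins $A$ to $B$ is at most $(1-p_2)^{|A||B|}\le\exp(-p_2\log^4 n)=n^{-\omega(1)}$; a union bound over all pairs of components of size between $\log^2 n$ and some threshold shows that after sprinkling, all large components of $G(n,p_1)$ merge into a single one. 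Since sprinkling only adds edges, the total count of vertices in large components changes by $o(n)$, yielding a unique giant of size $(1-\mu/(1+\epsilon)\pm o(1))n$.

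The main obstacle is the uniqueness step: the branching process heuristic cleanly gives the right total mass in large components but says nothing about how that mass is distributed across components, so the sprinkling argument (or an equivalent duality-based analysis of the subcritical graph on non-giant vertices) is the technical heart of the proof.
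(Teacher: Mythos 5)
This statement is quoted in the paper as a classical result of Erd\H os--R\'enyi (with references), and the paper gives no proof of it, so there is nothing internal to compare against; your outline follows the standard exploration-process route found in the cited textbooks. Most of it is fine at sketch level: the $G(n,m)\leftrightarrow G(n,p)$ transfer, the Poisson branching-process sandwich up to $n^{2/3}$ explored vertices, the identification $q=e^{c(q-1)}$, $\mu=cq$, $\mu e^{-\mu}=ce^{-c}$, and the second-moment concentration of the number of vertices in components of size at least $\log^2 n$.

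However, the uniqueness step has a genuine quantitative gap. First, the sprinkling decomposition is infeasible as stated: you need $p_1+p_2-p_1p_2=p=(1+\epsilon)/n$, so necessarily $p_2\le p=O(1/n)$, whereas you take $p_2=n^{-0.9}\gg p$. Second, and more fundamentally, with any admissible $p_2=O(1/n)$ the per-pair failure bound for two components of size $\log^2 n$ is $(1-p_2)^{|A||B|}\ge \exp(-O(\log^4 n/n))=1-o(1)$, not $n^{-\omega(1)}$; sprinkling at scale $O(1/n)$ simply cannot merge polylogarithmic components, so the union bound gives nothing. The missing ingredient is the standard ``gap in component sizes'' lemma: with high probability $G(n,p_1)$ has no component of size between $C(\epsilon)\log n$ and $n^{2/3}$ (proved by a Chernoff bound on the exploration walk, whose drift is $\Theta(\epsilon)$ per step -- the same fact the paper invokes in its proof of its Lemma 3 via Lemma 2.14 of the Frieze--Karo\'nski book). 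Once every ``large'' component has size at least $n^{2/3}$, sprinkling with $p_2=\Theta(\epsilon/n)$ does work, since $p_2 n^{4/3}\to\infty$, and the rest of your argument (merged giant of size $\ge(1-q-o(1))n$ inside a total large-component mass of $(1-q\pm o(1))n$) goes through. Alternatively, uniqueness can be obtained from the duality/symmetry argument applied to the vertices outside the largest component; either way, some version of the intermediate-size exclusion is the step you cannot skip.
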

In this theorem and throughout the paper, \textit{with high probability}, or \textit{w.h.p.}, refers to a probability that goes to 1 as $n$ goes to infinity with $\epsilon>0$ fixed. Similarly, $o(1)$ refers to a probability that goes to 0 as $n\rightarrow\infty$. On the other hand, even though $\epsilon$ is fixed with $n$, we care about ``sufficiently small'' $\epsilon$, so we will still use notation like $O(\epsilon^2)$ to mean a term that is $O(1)$ in $n$ but is $O(\epsilon^2)$ as $\epsilon\rightarrow 0$.

Using Taylor series, we see that $\mu$ in Theorem \ref{giant} equals $1-\epsilon+O(\epsilon^2)$, and thus the giant component has size $(1-\frac{\mu}{1+\epsilon}\pm o(1))n=(1\pm o(1)\pm O(\epsilon))2\epsilon n$. For our purposes, we will be working with $G(n,p)$ not $G(n,m)$, but all of our results transfer between the two models, as standard Chernoff bounds give that the number of edges in $G(n,\frac{1+\epsilon}{n})$ is w.h.p.~$\frac{(1+\epsilon)n}{2}\pm O((n\log n)^{1/2})=(1\pm o(1))\frac{(1+\epsilon)n}{2}$.
\subsection{Rainbow Coloring}
Now, we consider the colored random graph, $G_c(n,p)$, where every edge that is chosen in $G(n,p)$ is then uniformly and independently given a color from $[c]$. A set of edges in $G_c(n,p)$ is considered \textit{rainbow} if each edge is present but no two edges in the set have the same color. Cooley, Do, Erde, and Missethan conjectured the following:
\begin{conjecture}[\cite{CDEM}, Conjecture 4.1]\label{conj}
Let $\alpha>0,c=\alpha n,\epsilon>0$ sufficiently small. Let $p=\frac{1+\epsilon}{n}$. Then w.h.p.~the largest rainbow tree in $G_c(n,p)$ has order $(1\pm O(\epsilon))2\epsilon n$.
\end{conjecture}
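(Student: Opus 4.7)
The upper bound is immediate from Theorem \ref{giant}, since any rainbow tree is contained in some component of $G(n,p)$ and the largest such w.h.p.~has order $(1+O(\epsilon))2\epsilon n$. The substantive task is the matching lower bound.

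My plan is a two-step construction: build a large rainbow ``skeleton'' inside the giant component, then patch its defects using extra edges whose colours have not yet been consumed. Let $C$ be the giant of $G(n,p)$ and fix any spanning tree $T$ of $C$. Under the uniform colouring from $[\alpha n]$, a birthday-style calculation gives that w.h.p.~at most $O(|T|^2/(\alpha n))=O(\epsilon^2 n/\alpha)$ edges of $T$ share a colour with another edge of $T$. Deleting one representative per repeated colour class yields a rainbow subforest $T^\ast$ spanning $V(C)$ and consisting of $s=O(\epsilon^2 n)$ connected components, which by standard results on the supercritical giant (2-core plus hanging kernel trees) are typically of size $\Theta(1/\epsilon)$, with small components controlled by low-moment estimates on the kernel.

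To reconnect $T^\ast$ I would use a sprinkling coupling. Write $p=p_1+p_2-p_1p_2$ with $p_1=\tfrac{1+\epsilon-\delta}{n}$ and $\delta=\Theta(\epsilon^2\log(1/\epsilon))$, run the above construction inside the first-round giant $C_1$, and then use the independently coloured sprinkled edges of $G_2$ to merge the resulting components of $T^\ast$. Iteratively: expose a fresh batch of sprinkled edges, discard any whose colour is already used by the current rainbow tree or whose endpoints lie in the same current component, and add a spanning forest of what remains. Each round roughly halves the number of components but may orphan the vertices lying in components that fail to attach. By choosing $\delta$ so the expected number of usable edges incident to a typical component per round is $\Theta(1)$, $\Theta(\log(1/\epsilon))$ rounds suffice, and summing the geometric losses gives a single rainbow tree on $(1-O(\epsilon\log(1/\epsilon)))\cdot 2\epsilon n$ vertices.

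The main obstacle is exactly the merging step: controlling the joint distribution of the component sizes of $T^\ast$, the colours consumed so far, and the sprinkled edges tightly enough that the loss per round really is geometric. The inputs one needs are (i) sharp kernel/2-core estimates, so that components of $T^\ast$ of size $o(1/\epsilon)$ contribute negligibly to the vertex count; (ii) concentration (martingale or Talagrand-type) for the number of components absorbed per round, given the colour history; and (iii) careful bookkeeping of colour usage to keep the ``fresh colour'' probability at $1-O(\epsilon/\alpha)$ throughout. Tightening the per-round loss from $O(\epsilon)$ to $o(\epsilon/\log(1/\epsilon))$ would remove the logarithmic factor and recover the conjecture exactly; this looks genuinely harder and likely requires a single-pass argument that attaches almost all small components simultaneously rather than via geometric rounds.
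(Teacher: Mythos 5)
There is a genuine gap, and in fact two. First, the target: your plan, even taken on its own terms, delivers a rainbow tree on $(1-O(\epsilon\log(1/\epsilon)))2\epsilon n$ vertices, which is the weakened bound of Theorem \ref{main}, not the $(1\pm O(\epsilon))2\epsilon n$ of Conjecture \ref{conj} that you were asked to prove; you acknowledge this yourself in the last sentence, but that concession means the proposal does not prove the statement. (For what it is worth, the paper does not prove the conjecture either -- it only establishes the $\log(1/\epsilon)$-weakened version -- so closing that gap is not a matter of filling in routine details.)

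Second, and more seriously, the repair step is quantitatively infeasible, so the plan does not even reach the weakened bound. If you fix a spanning tree of the giant and delete an arbitrary representative of each repeated colour class, the deleted mantle edges are essentially uniform among the $\approx 2\epsilon n$ mantle edges, and the number of vertices hanging below a uniform mantle edge is Borel($\mu$) with mean $\approx 1/\epsilon$; so the $\Theta(\epsilon^2 n/\alpha)$ deletions orphan $\Theta(\epsilon n/\alpha)$ vertices in total -- a constant fraction of the giant, a factor $1/\epsilon$ more than you may lose. Your sprinkling cannot recover these: with $\delta=O(\epsilon^2\log(1/\epsilon))$ (and you cannot afford more, since the first-round giant already loses $\approx 2\delta n$ vertices), a defect component of size $k$ has expected sprinkled degree into the rest of the giant about $2k\epsilon\delta$, which is $o(1)$ even for the largest pendant subtrees ($k=O(\epsilon^{-2})$, giving $O(\epsilon\log(1/\epsilon))$); equivalently, the total number of sprinkled edges inside the giant is $\Theta(\epsilon^2\delta n)=O(\epsilon^4\log(1/\epsilon)n)$, far fewer than the $\Theta(\epsilon^2 n/\alpha)$ components needing attachment. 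So the ``geometric loss per round'' never gets started, and the requirement ``expected usable edges per typical component $=\Theta(1)$'' forces $\delta=\Theta(1)$, which destroys the first-round giant. The paper avoids repair altogether by choosing the order of colouring: core edges first, then mantle edges in decreasing order of $desc(e)$, so that an edge is only at risk of deletion (probability $<i/(\alpha n)$ at step $i$) once the Borel tail bound of Lemma \ref{concentrated} forces $desc(e_i)\le 36(\epsilon n/i)^2$ (Lemma \ref{edgei}); the expected loss at step $i$ is then $O(\epsilon^2 n/(i\alpha))$ and summing gives the $O(\epsilon^2\log(1/\epsilon)n)$ total. If you want to salvage your two-round architecture, you would at minimum need to replace ``delete an arbitrary representative'' by a rule that sacrifices only low-descendant edges, at which point you are reconstructing the paper's ordering argument and the sprinkling becomes unnecessary.
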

$(1\pm O(\epsilon))2\epsilon n$ is clearly an upper bound on the size of the largest rainbow tree, as the size of the largest tree in $G(n,p)$ is also $(1\pm O(\epsilon))2\epsilon n$. So the task is to show that there w.h.p.~exists a rainbow tree of this size. Cooley, Do, Erde, and Missethan also proved a weaker version of their conjecture:
\begin{theorem}[\cite{CDEM}, Theorem 1.4]
Let $\alpha>0,c=\alpha n,\epsilon>0$ sufficiently small. Let $p=\frac{1+\epsilon}{n}$. Then w.h.p.~the largest rainbow tree in $G_c(n,p)$ has order $(1\pm O(\epsilon))\frac{\alpha}{\alpha+1}\epsilon n$.
\end{theorem}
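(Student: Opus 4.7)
The plan is to construct a rainbow tree of the claimed size via a modified breadth-first search (BFS) on $G_c(n, p)$ that only traverses edges whose colors are currently unused (the matching upper bound follows from the giant component size, so I focus on the lower bound). Start from a vertex $v_0$, maintaining a rainbow tree $T$, a BFS queue $Q$, and the set $C$ of used colors. In each step, pop $u \in Q$, and for each undiscovered $w$ with $uw \in E(G)$ and color of $uw$ not in $C$, add $w$ to $T \cup Q$ and its color to $C$. By construction $T$ is rainbow, and it remains to lower bound $|T|$.

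Model the exploration by the random walk $X_t = |Q|$ with $d_t = |T| = t + X_t$. When processing the $(t{+}1)$-st vertex, the conditional expected number of new discoveries is, to leading order,
\[
(n - d_t)\, p\, \bigl(1 - \tfrac{d_t - 1}{\alpha n}\bigr) \;=\; (1+\epsilon)\bigl(1 - \tfrac{d_t}{n}\bigr)\bigl(1 - \tfrac{d_t}{\alpha n}\bigr),
\]
yielding $X_t$-drift $\epsilon - (1+\epsilon) d_t \tfrac{\alpha+1}{\alpha n} + O(d_t^2/n^2)$, positive precisely for $d_t < d^* := \alpha\epsilon n/[(\alpha+1)(1+\epsilon)] = (1 - O(\epsilon))\frac{\alpha}{\alpha+1}\epsilon n$. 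Setting $K = (1 - C\epsilon)\,d^*$ for a small constant $C$, the drift remains at least $\Omega(\epsilon^2)$ throughout $d_t \leq K$. Once $X_t$ enters the ``bulk'' phase (say $X_t \geq 1/\epsilon^3$), Azuma--Hoeffding against this $\Omega(\epsilon^2)$ drift keeps $X_t > 0$ for all remaining steps until $d_t = K$, since the accumulated drift $\Omega(\epsilon^3 n)$ dominates the $O(\sqrt{\epsilon n})$ fluctuations; this delivers a rainbow tree of size $K = (1 - O(\epsilon))\frac{\alpha}{\alpha+1}\epsilon n$.

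The main obstacle is early-phase survival: from a single seed $v_0$, the BFS queue reaches $\omega(1/\epsilon^2)$ with probability only $\Theta(\epsilon)$, by the gambler's ruin estimate for a near-critical supercritical walk (drift $\epsilon$, variance $O(1)$). I would remedy this via a sprinkling / multi-start argument: expose $G_c(n,p)$ in two rounds with edge-retention probabilities splitting $p$, launching parallel rainbow BFSes in Round~1 from $\omega(\log n/\epsilon)$ well-spaced random seeds, so that their early explorations (using only $O(1/\epsilon^2) = o(n)$ vertices and colors each) are approximately disjoint and hence nearly independent, each succeeding w.p.\ $\Theta(\epsilon)$; by Chernoff at least one seed reaches size $\omega(1/\epsilon^2)$ w.h.p., and its BFS can then be continued through the bulk using the freshly-exposed Round~2 edges. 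Verifying the near-independence of parallel starts and controlling the loss from the two-round split—so that the effective $K$ degrades by only $(1 - O(\epsilon))$—is the technical heart.
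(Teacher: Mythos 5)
This theorem is quoted from \cite{CDEM} and is not proved in the present paper (whose own argument, for Theorem \ref{main}, goes through the Ding--Lubetzky--Peres description of the giant rather than an exploration process), so I can only assess your argument on its own terms. Your rainbow-BFS drift computation is the right mechanism and does produce the constant $\frac{\alpha}{\alpha+1}$: edges from the current frontier vertex to undiscovered vertices are unexposed given the past, their colors are fresh, the number of banned colors is $|T|-1$, and the drift $\epsilon-(1+\epsilon)d_t\frac{\alpha+1}{\alpha n}+O(d_t^2/n^2)$ vanishes at $d_t\approx\frac{\alpha}{\alpha+1}\epsilon n$; the concentration step in the bulk is routine. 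Two smaller issues: the ``matching upper bound from the giant component'' is $(1+O(\epsilon))2\epsilon n$, not $(1+O(\epsilon))\frac{\alpha}{\alpha+1}\epsilon n$ (indeed Theorem \ref{main} exhibits larger rainbow trees, so the real content of the cited theorem is the lower bound), and a buffer of $1/\epsilon^3$ only gives survival probability $1-e^{-\Theta(1/\epsilon)}$, which is a constant in $n$; to get a statement that holds w.h.p.\ as $n\to\infty$ the height you continue from must grow with $n$, e.g.\ $\Theta(\log n/\epsilon^2)$.

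The genuine gap is the early-phase patch. Your two-round sprinkling cannot be instantiated: for a Round~1 seed to reach size $\omega(1/\epsilon^2)$ with probability $\Theta(\epsilon)$ you need $np_1\ge 1+\Omega(\epsilon)$, which forces $np_2=O(\epsilon)$; but then a continuation ``through the bulk using the freshly-exposed Round~2 edges'' has expected offspring at most $np_2\ll 1$ per step, i.e.\ drift about $-1$, and dies immediately. No split of $p=\frac{1+\epsilon}{n}$ makes both phases supercritical, and the ``well-spaced, approximately independent parallel seeds'' claim is precisely the part you defer. Fortunately no sprinkling is needed: in a single exposure, the BFS never examines edges or colors between the current vertex and undiscovered vertices, so whenever the queue empties you may simply restart from a fresh unexplored vertex, keeping the banned-color set cumulative. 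Each failed attempt w.h.p.\ wastes at most $\epsilon^{-O(1)}\log n$ vertices and colors, so the conditional success probability of every restart remains $\Theta(\epsilon)$, and w.h.p.\ within $O(\log n/\epsilon)$ restarts some exploration reaches height $\Theta(\log n/\epsilon^2)$, after which your bulk drift argument applies verbatim with the total waste $o(\epsilon n)$ absorbed into the $O(\epsilon)$ error. With that replacement (and the buffer correction above) your outline becomes a correct proof of the lower bound.
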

The result of this paper is that we improve the leading constant from $\frac{\alpha}{\alpha+1}$ to its optimal value of 2, resolving their conjecture up to a logarithmic factor in the lower-order term:
\begin{theorem}\label{main}
Let $\alpha>0,c=\alpha n,\epsilon>0$ sufficiently small. Let $p=\frac{1+\epsilon}{n}$. Then w.h.p.~the largest rainbow tree in $G_c(n,p)$ has order $(1\pm O(\epsilon\log(1/\epsilon)))2\epsilon n$.
\end{theorem}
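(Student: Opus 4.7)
The plan is to construct a rainbow subtree of the giant component $C_1$ of $G:=G(n,p)$ covering all but $O(\epsilon^2\log(1/\epsilon)\,n)$ vertices; since $|V(C_1)|=(1\pm O(\epsilon))\,2\epsilon n$ w.h.p., this proves the theorem.

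\textbf{Structural decomposition.} I would first expose the edges of $G$ (without revealing colors) and decompose $C_1$ into its 2-core $K$ and the attached pendant forest $M$ (the mantle). Standard sparse-random-graph asymptotics give $|V(K)|\sim 2\epsilon^2 n$ with cycle rank $\Theta(\epsilon^3 n)$, and $|V(M)|\sim 2\epsilon n-2\epsilon^2 n$ arranged as pendant trees rooted at $V(K)$, governed by a subcritical Galton--Watson process with mean offspring $1-\Theta(\epsilon)$. Each mantle vertex $v$ has a unique parent-pointing edge $e_v$ in $M$.

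\textbf{Construction.} After revealing the coloring I would build the rainbow tree in two phases. \emph{Phase 1:} find a rainbow spanning tree $T_K$ of $K$. Since the expected number of color collisions among the $\sim 2\epsilon^2 n$ edges of an arbitrary spanning tree of $K$ is $O(\epsilon^4 n)$, dominated by the cycle rank $\Theta(\epsilon^3 n)$, there is ample slack to perform edge-swaps: for each pair of tree edges sharing a color, use a non-tree $K$-edge bridging the two sides of one of them to swap it out. A careful first-moment and swap argument should yield $T_K$ w.h.p. \emph{Phase 2:} extend $T_K$ by processing mantle vertices in BFS order outward from $K$. For each $v$: if $e_v$ has a fresh color, attach $v$; otherwise the mantle-subtree $S_v$ is at risk, and I would either discard it or attempt to \emph{reroute} via another $G$-edge of fresh color with an endpoint in $V(S_v)$.

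\textbf{Loss analysis.} The total loss is controlled by the discarded mantle subtrees. Each mantle edge has conflict probability $O(\epsilon)$ (since only $\Theta(\epsilon n)$ colors have been used), so the expected number of conflicts is $\Theta(\epsilon^2 n)$. Since pendant-subtree sizes have exponential tails on scale $\Theta(1/\epsilon)$, careful bookkeeping splits the conflicts by dyadic scale of $|S_v|$: small subtrees contribute at most their size per conflict, and for $|S_v|$ above a threshold $L$ I reroute using one of the $\Theta(\epsilon|S_v|)$ candidate edges from $V(S_v)$ into the already-attached part of the tree, each fresh with probability $1-O(\epsilon)$. Choosing $L$ appropriately and applying Chernoff plus a union bound over the $O(\epsilon^2 n)$ conflict events makes all reroutes succeed w.h.p., and summing the small-subtree discards across dyadic scales produces the $O(\epsilon^2\log(1/\epsilon)\,n)$ bound, with the logarithmic factor arising from this scale decomposition.

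\textbf{Main obstacle.} The crux is the tight dovetailing in Phase 2: the threshold $L$ must simultaneously keep the small-subtree discard losses within the $\epsilon^2\log(1/\epsilon)\,n$ budget and allow rerouting to succeed uniformly across all conflicts. This requires (a) precise distributional control over pendant-subtree sizes from the subcritical Galton--Watson description, (b) Chernoff-type concentration for the count of fresh reroute-edges, and (c) care to ensure that reroutes themselves do not introduce new color conflicts. Making the Phase 1 swap argument fully rigorous is also nontrivial (since swaps may cascade), but the wide gap between $\Theta(\epsilon^3 n)$ available cycles and $O(\epsilon^4 n)$ conflicts makes it comparatively benign.
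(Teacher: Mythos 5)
There is a genuine gap, and it sits exactly where your proposal leans hardest: Phase 2. First, your rerouting device cannot exist. A pendant mantle subtree $S_v$ is, by definition of the 2-core/mantle decomposition, attached to the rest of the giant component by the single edge $e_v$; any other edge from $V(S_v)$ to the remainder of the component would create a cycle and place those vertices in the core. So when $e_v$ receives a repeated color there are no ``$\Theta(\epsilon|S_v|)$ candidate edges from $V(S_v)$ into the already-attached part'' to reroute through --- the only options are to keep $e_v$ or to lose all of $S_v$. Second, your distributional input is wrong: pendant subtree sizes do not have exponential tails on scale $\Theta(1/\epsilon)$. They are Borel($\mu$) with $\mu=1-\epsilon+O(\epsilon^2)$, whose tail is $\mathbb{P}(X>j)\approx j^{-1/2}$ all the way up to the cutoff scale $\epsilon^{-2}$ (this is the paper's Lemma \ref{sqrttail}, and it is essentially tight). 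With a uniform conflict probability $\Theta(\epsilon/\alpha)$ per mantle edge, your dyadic bookkeeping then blows up at the top scales: edges with $desc\in[2^k,2^{k+1})$ number about $\epsilon n2^{-k/2}$, so the expected discard at scale $k$ is of order $\epsilon^2n2^{k/2}/\alpha$, and summing up to $2^k\approx\epsilon^{-2}$ gives $\Theta(\epsilon n/\alpha)$ --- larger than the $O(\epsilon^2\log(1/\epsilon)n)$ budget by a factor of about $1/(\epsilon\log(1/\epsilon))$ --- and, by the first point, rerouting cannot absorb the large-subtree conflicts. Equivalently: the average depth of a mantle vertex is $\Theta(1/\epsilon)$, so discarding every conflicted subtree loses a constant fraction (in $\alpha$) of $\epsilon n$ vertices.

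The missing idea, which is the heart of the paper's proof, is to make the conflict probability depend on the size of what is at risk by choosing the \emph{order} in which colors are exposed: color the core edges and then the mantle edges in decreasing order of $desc(e)$. When the $i$-th edge is colored, at most $i$ colors have been used, so it is deleted with probability at most $i/(\alpha n)$, while concentration of the Borel tail counts (Lemma \ref{concentrated}) guarantees $desc(e_i)\le 36(\epsilon n/i)^2$; the expected loss per edge is then $O(\epsilon^2 n/(i\alpha))$ and summing over $i$ from $\Theta(\epsilon^2 n)$ to $\Theta(\epsilon n)$ gives the harmonic-sum bound $O(\epsilon^2\log(1/\epsilon)n)$ --- that harmonic sum, not a dyadic split of discards, is where the logarithm comes from. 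Your Phase 1 is also much heavier than needed: rather than building a rainbow spanning tree of the core by edge swaps (where cascading conflicts are a real issue), the paper simply notes that during the first $5\epsilon^2 n$ colorings each edge survives independently with probability at least $1-5\epsilon^2/\alpha$, which couples the surviving graph over a supercritical $G(n,p')$ with $p'=(1+(1-O(\epsilon))\epsilon)/n$, whose giant component still has $\approx 2\epsilon n$ vertices; losing a few core edges is harmless because the whole core plus the heavy mantle edges fit within this initial phase.
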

\subsection{Relation to Previous Literature}

A major goal in $G_c(n,p)$ has been to find rainbow thresholds for given structures, that is, to prove that for certain $p$ and $c$, there w.h.p.~is (or is not) a rainbow copy of that structure. Previous results in this model have shown optimal or near-optimal rainbow thresholds for the existence of certain trees, which can also be thought of as showing the size of the rainbow connected component. Frieze and Mckay showed that $G_c(n,p)$ has a rainbow spanning tree w.h.p.~whenever the uncolored graph is connected and at least $n-1$ colors have appeared \cite{FriezeMckay}. Aigner-Horev, Hefetz, and Lahiri showed that for any given tree of size $(1-\epsilon)n$ for constant $\epsilon>0$, it suffices to take $p=\omega(1/n)$ and $c=n$ for there to w.h.p.~exist a rainbow copy of that tree \cite{bigrainbowtrees}. Cooley, Do, Erde, and Missethan showed that for any fixed $\epsilon>0$, there exists a $d(\epsilon)>1$ such that $G_n(n,\frac dn)$ will w.h.p. have a tree, and in fact a cycle, of size $(1-\epsilon)n$ \cite{CDEM}. Our paper's result can be thought of as showing the threshold for a rainbow tree of size $\epsilon n$ for small fixed $\epsilon>0$. For a tree of size $o(n)$ but $\omega(n^{2/3})$, the optimal result was also shown by Cooley, Do, Erde, and Missethan \cite{CDEM}.

In general, a breakthrough result on uncolored thresholds by Frankston, Kahn, Narayanan, and Park \cite{FKNP} was extended by Bell, Frieze, and Marbach to show that for any structure with $k$ edges, $p$ can always be taken to be within a $\log(k)$ factor of the optimal value as long as $c>\gamma k$ for some $\gamma>1$ \cite{BFM}. This could be applied to trees of size $\epsilon n$, but the logarithmic factor is too large.

Our proof relies heavily on previous results and distributional lemmas regarding the structure of the uncolored giant component, which we will discuss in Section 2. We will prove Theorem \ref{main} in Section 3.
\section{Structure of the Uncolored Giant Component}
\subsection{The Core and Mantle}
Throughout the paper, we will define $a\lesssim b$ to mean $a\le b(1\pm o(1)\pm O(\epsilon))$, that is, $\le$ up to factors allowed by Conjecture \ref{conj}. We will use $\gtrsim$ and $\approx$ similarly. For instance, we can say the giant component has size $\approx 2\epsilon n$.

After its size, the next thing about the giant component we will need to know is the size of its core. The \textit{core}, or \textit{2-core}, of a graph $G$ is the maximal subgraph of $G$ where all vertices have degree at least 2. Note that the core of a connected component is always itself connected.
\begin{lemma}[\cite{AKSlongpath};\cite{geomdepth}, Theorem 3;\cite{giantcore}, Theorem 1(ii);\cite{BollobasBook,AlanBook}]\label{2core}
Let $\epsilon>0$ constant and let $C$ be the core of the giant component in $G(n,\frac{1+\epsilon}{2})$. With high probability,\[|V(C)|\approx|E(C)|\approx 2\epsilon^2n.\]
\end{lemma}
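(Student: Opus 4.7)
The plan is to use the local branching-process approximation to neighborhood exploration in $G(n,(1+\epsilon)/n)$. Each vertex has asymptotically $\mathrm{Po}(1+\epsilon)$ neighbors in the graph, and each sub-exploration survives indefinitely with probability $\rho = 1 - \mu/(1+\epsilon)$, where $\mu$ is as in Theorem \ref{giant}. Taylor-expanding $\mu e^{-\mu} = (1+\epsilon)e^{-(1+\epsilon)}$ yields $\mu = 1 - \epsilon + O(\epsilon^2)$, so $(1+\epsilon)\rho = 1 + \epsilon - \mu = 2\epsilon + O(\epsilon^2)$.

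Structurally, $v \in V(C)$ iff at least two of its neighbors extend into surviving subtrees once $v$ is deleted, and $\{u,v\} \in E(C)$ iff the edge is present and, after its deletion, both $u$ and $v$ still have at least one surviving branch through their other neighbors. Treating branch survivals as independent (the crux of the approximation), the number of surviving branches at $v$ is distributed as $\mathrm{Po}(\lambda)$ with $\lambda = 2\epsilon + O(\epsilon^2)$, so $P(v \in V(C)) = P(\mathrm{Po}(\lambda) \ge 2) = \lambda^2/2 + O(\lambda^3) = 2\epsilon^2 + O(\epsilon^3)$. Linearity then gives $\mathbb{E}|V(C)| \approx 2\epsilon^2 n$. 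For edges, $P(\{u,v\} \in E(C)) = p(1 - e^{-\lambda})^2 = ((1+\epsilon)/n)(2\epsilon)^2(1+O(\epsilon))$, and summing over $\binom{n}{2}$ pairs gives $\mathbb{E}|E(C)| \approx 2\epsilon^2 n$ as well.

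Concentration is obtained via a second moment estimate: the 2-core indicators for vertices at large graph distance are essentially independent in the branching-process coupling, so the variance of both $|V(C)|$ and $|E(C)|$ is $o((\epsilon^2 n)^2)$, and Chebyshev concludes.

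The main obstacle is making the branching-process heuristic rigorous. The local tree coupling is valid only out to depth $o(\log n)$, and ``surviving indefinitely'' must be replaced by ``reaching the giant'', an event that depends on global structure rather than local tree data. The standard remedy, used in the cited references, is to truncate exploration at some depth $T = \Theta(\log(1/\epsilon)/\epsilon)$ (the typical resolution time for near-critical sub-explorations), argue that truncated survival correctly predicts 2-core membership for all but $o(\epsilon^2 n)$ vertices w.h.p., and then transfer the Poisson computation to $G(n,p)$ via Poisson cloning or the configuration-model / sprinkling coupling.
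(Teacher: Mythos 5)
The paper never proves Lemma~\ref{2core}: it is imported verbatim from the literature (the citations \cite{AKSlongpath,geomdepth,giantcore,BollobasBook,AlanBook} are the proof), so there is no in-paper argument to compare you against. Your sketch is essentially a reconstruction of the standard branching-process derivation that underlies those references, and your first-order arithmetic is correct: the extinction probability of the Poisson$(1+\epsilon)$ process is indeed $\mu/(1+\epsilon)$, so the number of ``surviving'' branches at a vertex is asymptotically Poisson with mean $\lambda=(1+\epsilon)\rho=1+\epsilon-\mu=2\epsilon+O(\epsilon^2)$, giving $\mathbb{P}(\mathrm{Po}(\lambda)\ge 2)=\lambda^2/2+O(\lambda^3)\approx 2\epsilon^2$ for core vertices and $\binom{n}{2}p\,(1-e^{-\lambda})^2\approx \frac n2\lambda^2\approx 2\epsilon^2 n$ for core edges, which matches the lemma (note the lemma's ``$G(n,\frac{1+\epsilon}{2})$'' is a typo for $G(n,\frac{1+\epsilon}{n})$, and you correctly work with the latter). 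As a standalone proof, however, what you have written is a plan rather than an argument: the two steps carrying all the difficulty are only asserted. First, the local-to-global transfer --- replacing ``a surviving branch'' by a verifiable event (reaching depth $T$, or reaching the giant) and showing that this local surrogate misclassifies only $o(\epsilon^2 n)$ vertices and edges --- is exactly the content of the cited theorems, and your sketch defers it to them. Second, the concentration step (``essentially independent at large distance, so Chebyshev'') needs a quantitative two-point correlation estimate or a sprinkling/two-round exposure argument; none is supplied. A further small point: you implicitly identify the 2-core of the giant with the 2-core of the whole graph; this is harmless, since outside the giant the 2-core consists only of the cycles of unicyclic components, $O(\log n)$ vertices w.h.p., but it should be said. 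Since the paper itself treats the lemma as quoted background, closing these gaps by citing \cite{giantcore} or \cite{AlanBook}, as you do at the end, is legitimate; with that reading your proposal is a correct account of \emph{why} the constant is $2\epsilon^2$, but not an independent proof of the lemma.
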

In other words, only an $\epsilon$ fraction of the giant component lies in the core. The vertices and edges that are in the giant component but not in its core are called the \textit{mantle} vertices and edges. Note that the vertices of the giant component along with only the mantle edges form a forest where each component contains exactly one vertex in the core. 

A more specific description of the giant component was given in 2015 by Ding, Lubetzky, and Peres \cite{DING2014155}. They gave a model that the core of $G(n,\frac{1+\epsilon}{n})$ is \textit{contiguous} to, which means that any graph property is true w.h.p.~in their model is true w.h.p.~in the core of $G(n,\frac{1+\epsilon}{n})$. Recall that $\mu$ is the solution in $(0,1)$ to $\mu e^{-\mu}=(1+\epsilon)e^{-(1+\epsilon)}$. Their model is the following:
\begin{itemize}
    \item First, let $\Lambda\sim\mc{N}(1+\epsilon-\mu,\frac 1n)$. For $1\le u\le n$, let $D_u\sim$ Poisson($\Lambda$) iid, conditioned on $\sum D_u\1_{D_u\ge 3}$ being even. Let $N_k$ be the number of $u$ such that $D_u=k$, and let $N=\sum_{k\ge 3}N_k$. Select a random multigraph $\mc{K}$ on $N$ vertices uniformly among all multigraphs with $N_k$ vertices of degree $k$ for all $k\ge 3$.
    \item Then, replace each edge of $\mc{K}$ with paths of length iid $Geom(1-\mu)$ to form $\mc{K}'$.
    \item Finally, independently for each vertex $v$ in $\mc{K}'$, add on a Poisson($\mu$)-Galton-Watson tree rooted at $v$.
\end{itemize}
For our proof, we do not need any details of the first two steps (beyond that they form a core $C$ with $|V(C)|\approx|E(C)|\approx 2\epsilon^2n$). We heavily rely on more specific details of the final step, and for the rest of this paper will prove everything w.h.p.~under their model.
\subsection{Galton-Watson Trees}

The \textit{Poisson($\mu$)-Galton-Watson tree}, or \textit{$\mu$-PGW tree}, is a random tree that is created as follows:
\begin{itemize}
    \item Start with a root vertex, which we will call level 0. Set $j=0$.
    \item While level $j$ is not empty:
    \begin{itemize}
        \item Independently for each vertex $v$ in level $j$, create Poisson($\mu$) children of $v$, which will be on level $j+1$.
        \item Increment $j$.
    \end{itemize}
\end{itemize}Because $\mu<1$, the resulting tree is finite with probability 1. This model was studied by Bienaym\'e in 1845 and Galton and Watson in 1875 \cite{Bienayme,GaltonWatson}. In 1942, Borel proved that the number of vertices (including the root) in a $\mu$-PGW tree $T$ follows the following distribution: for any $k\in\mb{Z}_{>0}$,
\[\mb{P}(|V(T)|=k)=\frac{e^{-\mu k}(\mu k)^{k-1}}{k!},\] which is called the Borel($\mu$) distribution \cite{Borel,DING2014155}.

For any edge $e$ in a $\mu$-PGW tree, let $desc(e)$ be the number of vertices in the subtree rooted at the child vertex of $e$. Note that each vertex created then becomes the root of a new $\mu$-PGW tree, so $desc(e)\sim$ Borel($\mu$) for every $e$ that is created. Now that we know that the Borel distribution appears in the giant component, we will bound its tail probabilities.
\begin{lemma}\label{sqrttail}
Let $X\sim $Borel($\mu$). For any $j\in\mb{Z}_{>0}$, \[\mb{P}(X>j)<\frac{1}{j^{1/2}\mu}.\]
\end{lemma}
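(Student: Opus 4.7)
The plan is to bound the tail sum $\mathbb{P}(X>j) = \sum_{k>j} \frac{e^{-\mu k}(\mu k)^{k-1}}{k!}$ termwise using Stirling's inequality, and then compare the resulting series with an integral. Nothing deeper than Stirling's formula and the inequality $\mu e^{1-\mu} \le 1$ for $\mu\in(0,1)$ seems to be needed.

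First I would apply the Stirling lower bound $k! \ge \sqrt{2\pi k}\,(k/e)^k$ to each term. Cancelling the $k^{k-1}$ in the numerator against the $k^k$ from Stirling, and combining $e^{-\mu k} \cdot e^k = e^{(1-\mu)k}$, one arrives at
\[
\mathbb{P}(X=k) \;\le\; \frac{\mu^{k-1} e^{(1-\mu)k}}{\sqrt{2\pi}\,k^{3/2}}
\;=\; \frac{(\mu e^{1-\mu})^{k}}{\mu\sqrt{2\pi}\,k^{3/2}}.
\]
The second key observation is that the function $f(\mu)=\mu e^{1-\mu}$ is increasing on $(0,1)$ with $f(1)=1$, so $(\mu e^{1-\mu})^k \le 1$ for all $\mu\in(0,1)$ and $k\ge 1$. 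This reduces the per-term bound to $\mathbb{P}(X=k) \le (\mu\sqrt{2\pi}\,k^{3/2})^{-1}$, which already carries the correct $k^{-3/2}$ decay that produces the $j^{-1/2}$ tail.

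Summing from $k=j+1$ and bounding $\sum_{k>j} k^{-3/2}$ by $\int_{j}^{\infty} x^{-3/2}\,dx = 2j^{-1/2}$ gives
\[
\mathbb{P}(X>j) \;\le\; \frac{2}{\mu\sqrt{2\pi}\,j^{1/2}} \;=\; \sqrt{\frac{2}{\pi}}\cdot\frac{1}{\mu\,j^{1/2}},
\]
and since $\sqrt{2/\pi}<1$ the strict inequality in the claim follows. I do not anticipate a real obstacle here: the subexponential factor $(\mu e^{1-\mu})^k$ is harmless because we throw it away using $\mu<1$, and all slack in the constants is absorbed by the factor $\sqrt{2/\pi}<1$. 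The only mildly delicate point is making sure the Stirling bound is applied with enough room so that the constant $\sqrt{2/\pi}$, and not some larger constant coming from a weaker form of Stirling, survives to the end; using the sharp lower bound $k!\ge\sqrt{2\pi k}\,(k/e)^k$ (valid for all $k\ge 1$) is what makes this work cleanly.
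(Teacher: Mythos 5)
Your proposal is correct and follows essentially the same route as the paper: a termwise Stirling lower bound on $k!$, the observation that $\mu e^{1-\mu}\le 1$ for $\mu\in(0,1)$, and an integral comparison for $\sum_{k>j}k^{-3/2}$. The only difference is cosmetic: you use the sharp constant $\sqrt{2\pi k}$ where the paper uses the cruder bound $k!>2k^{1/2}(k/e)^k$, which already suffices to reach the stated constant $1/(j^{1/2}\mu)$.
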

\begin{proof}
\begin{align*}
\mb{P}(X>j)&=\sum_{k=j+1}^\infty\frac{e^{-\mu k}(\mu k)^{k-1}}{k!}<\sum_{k=j+1}^\infty\frac{e^{-\mu k}(\mu k)^{k-1}}{2k^{1/2}(k/e)^k}=\sum_{k=j+1}^\infty\left(e^{1-\mu}\mu\right)^k(2\mu)^{-1}k^{-1.5}\\&<(2\mu)^{-1}\sum_{k=j+1}^\infty k^{-1.5}<(2\mu)^{-1}\int_{k=j}^\infty k^{-1.5}=\frac{1}{j^{1/2}\mu}
\end{align*}
\end{proof}
\subsection{The Mantle of the Giant Component}
Let $M$ be the set of mantle edges. For any $e\in M$, note that $desc(e)$ (with respect to the $\mu$-PGW tree containing $e$ rooted in the core) equals the number of vertices in the mantle that would be disconnected from the core if $e$ were removed. From the last section we know that for a fixed $e\in M$, $desc(e)$ is distributed as Borel($\mu$), where $\mu=1-\epsilon\pm O(\epsilon^2)$. 
\begin{lemma}\label{concentrated}
Let $1\le j\le O(1)$ with respect to $n$ (but $j$ can depend on $\epsilon$) and let $D_j=|\{e\in M:desc(e)>j\}|$. Then, 
\[
\mb{P}\brac{D_j>\frac{3\epsilon n}{j^{1/2}}}<n^{-2}.
\]
\end{lemma}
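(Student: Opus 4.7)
My plan is to exploit the fact that, under the contiguous model of Section 2.1, the mantle is a disjoint union of independent $\mu$-PGW trees $T_v$, one for each $v \in V(C)$. Setting $Y_v = |\{e \in E(T_v) : desc(e) > j\}|$ gives $D_j = \sum_{v \in V(C)} Y_v$ as a sum of mutually independent nonnegative variables, which we can work with conditional on the w.h.p.\ event $|V(C)| \lesssim 2\epsilon^2 n$ from Lemma \ref{2core}. A short Galton--Watson recursion---conditioning on the Poisson$(\mu)$ number of root children and noting that each child spawns an independent subtree of Borel$(\mu)$ size---gives $\mb{E}[Y_v] = \mu(\mb{P}(X>j) + \mb{E}[Y_v])$ with $X \sim \mathrm{Borel}(\mu)$, and hence $\mb{E}[Y_v] = \mu\mb{P}(X>j)/(1-\mu)$. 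Combining with Lemma \ref{sqrttail} and $1-\mu \approx \epsilon$ yields $\mb{E}[D_j] \lesssim 2\epsilon n/j^{1/2}$, leaving a constant factor of slack below the target $3\epsilon n/j^{1/2}$.

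For concentration, I would truncate. Pick a threshold $L = \Theta(\log n/\epsilon^2)$ and split $D_j = \sum_v \min(Y_v,L) + (\text{overshoot})$. On the truncated side, each of the $O(\epsilon^2 n)$ summands lies in $[0,L]$ with mean $O(1/(j^{1/2}\epsilon))$, so Bernstein's inequality produces a deviation tail of order $\exp(-\Omega(\epsilon n/(j^{1/2} L)))$; with the stated $L$ and $j = O(1)$ in $n$, this is comfortably $n^{-\omega(1)}$. The overshoot is controlled by a union bound over the at most $3\epsilon^2 n$ trees, using $\mb{P}(Y_v > L) \le \mb{P}(|V(T_v)| > L)$.

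The main obstacle is that Lemma \ref{sqrttail} alone, with its slow $L^{-1/2}$ decay, is too weak for this union bound. I would therefore first sharpen it: applying Stirling to $(\mu k)^{k-1}e^{-\mu k}/k!$ isolates a factor $(\mu e^{1-\mu})^k$, and since $\mu < 1$ we have $\mu e^{1-\mu} = 1 - \Omega((1-\mu)^2)$, so $\mathrm{Borel}(\mu)$ in fact admits an exponential tail $\mb{P}(X > L) \le \exp(-c\epsilon^2 L)$ for an absolute $c > 0$. With $L = C\log n/\epsilon^2$ and $C$ taken large, this is $n^{-\omega(1)}$, so the union bound contributes at most $n^{-2}/2$ and combines with the Bernstein bound to give the claimed $n^{-2}$. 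The remaining work is just calibration: $L$ must be small enough that Bernstein sees cancellation ($L \ll \epsilon n/\log n$) and large enough that the sharpened tail beats the union bound ($L \gg \log n/\epsilon^2$), both of which hold easily for large $n$ with $\epsilon$ and $j$ fixed.
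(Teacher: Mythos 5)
Your proposal is sound and takes a genuinely different route from the paper's. The paper bounds $\mb{E}D_j$ crudely (each mantle edge has $desc(e)>j$ with probability at most $1.1/j^{1/2}$ by Lemma \ref{sqrttail}, and $|M|\lesssim 2\epsilon n$), and then gets concentration not from independence of the attached trees but from Warnke's typical bounded differences inequality (Theorem \ref{warnke}) applied to the vertex exposure of $G_{n,p}$, with the typical event $\Gamma$ controlling the maximum degree and the sizes of pendant trees so that the typical Lipschitz constants are $n^{2\eta}$ rather than $n$. You instead lean on the Ding--Lubetzky--Peres structure directly: writing $D_j=\sum_{v\in V(C)}Y_v$ as a sum of independent per-tree counts, computing $\mb{E}Y_v=\mu\,\mb{P}(X>j)/(1-\mu)$ by the Galton--Watson recursion (which recovers the same $\approx 2\epsilon n/j^{1/2}$ bound on the mean, using $1-\mu=\epsilon(1-O(\epsilon))$), and then truncating at $L=\Theta(\log n/\epsilon^2)$ so that Bernstein applies to the bounded parts; your Bernstein exponent $\Omega(\epsilon n/(j^{1/2}L))$ is indeed superlogarithmic for fixed $\epsilon$ and $j$. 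Your sharpening of Lemma \ref{sqrttail} to an exponential tail is correct and is exactly what the overshoot union bound needs: from the Stirling step one gets the factor $(\mu e^{1-\mu})^k$ and $\mu e^{1-\mu}=e^{\ln\mu+1-\mu}\le e^{-(1-\mu)^2/2}$, so $\mb{P}(X>L)\le e^{-c\epsilon^2 L}$ up to harmless prefactors once $L\gg \epsilon^{-2}\log(1/\epsilon)$. In exchange for avoiding the typical-bounded-differences machinery, your argument is tied to the independence structure of the contiguous model, whereas the paper's concentration step runs on the exposure martingale of the random graph and needs no exact distributional input beyond the expectation bound; both are legitimate, and yours is arguably the more elementary and self-contained of the two.

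One point to tighten: you condition on the merely-w.h.p.\ event $|V(C)|\lesssim 2\epsilon^2 n$, but the lemma asserts an unconditional bound of $n^{-2}$, and Lemma \ref{2core} as stated only gives failure probability $o(1)$, which would swamp $n^{-2}$. You should either observe that under the Ding--Lubetzky--Peres model the core size (a Poisson count with Gaussian-perturbed mean plus sums of independent geometric path lengths) concentrates with superpolynomially small failure probability, so the conditioning event can be taken to fail with probability $o(n^{-2})$, or note that a version of the lemma conditional on one fixed such event suffices for the union bound over $i$ in Lemma \ref{edgei}. Either patch is routine (and the paper's own proof is similarly casual when it invokes the w.h.p.\ bound $|M|\lesssim 2\epsilon n$ to control $\mb{E}D_j$), but as written this step is the only place where your argument does not literally deliver the stated inequality.
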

\begin{proof}
When creating the mantle under the Ding--Lubetzky--Peres model, for each edge $e$ created, we know $\mb{P}(desc(e)>j)=\mb{P}(Borel(\mu)>j)\le 1.1/j^{1/2}$ for sufficiently small $\epsilon$ by Lemma \ref{sqrttail}. We also know w.h.p.~that $|M|\lesssim 2\epsilon n$, so $\mb{E}D_j\lesssim 2.2\epsilon n/j^{1/2}$. To show concentration of $D_j$ around its expectation, we use the following theorem from Warnke \cite{War}:
\begin{theorem}\label{warnke}
Let $X=(X_1,X_2,\ldots,X_N)$ be a family of independent random variables with $X_k$ taking values in a set $\Lambda_k$. Let $\Omega=\prod_{k\in[N]}\Lambda_k$ and suppose that $\Gamma\subseteq \Omega$ and suppose that $f:\Omega\to{\bf R}$ are given. Suppose also that whenever $\bx,\bx'\in \Omega$ differ only in the $k$th coordinate 
\[
|f(\bx)-f(\bx')|\leq \begin{cases}c_k&if\ \bx\in\Gamma.\\d_k&otherwise.\end{cases}
\]
If $W=f(X)$, then for all reals $\gamma_k>0$,
\[
\mb{P}(W\geq \mb{E}(W)+t)\leq \exp\set{-\frac{t^2}{2\sum_{k\in[N]}((c_k+\gamma_k(d_k-c_k)^2))}}+\mb{P}(X\notin \Gamma)\sum_{k\in [N]}\gamma_k^{-1}.
\]
\end{theorem}
We apply this theorem with $W=D_j$ and $X_k$ equal to the set of neighbors of vertex $k$ in $[k-1]$. We take $\Gamma$ to be the occurence of (i) the maximum degree in $G_{n,p}$ is at most $n^\eta$ for some small $\eta$ and (ii) every tree in $G_{n,p}$  of size at most $n^{1-\eta}$ that contains at most one vertex (the root) that is adjacent to vertices outside the tree, has size at most $n^\eta$. Under these circumstances ${\bf Pr}(X\notin \Gamma)\leq e^{-n^\eta}$ and $c_k\leq n^{2\eta}$ and $d_k\leq n$. (This is proved by first moment calculations. In particular, property (ii) follows from the gap in component sizes, see Lemma 2.14 of \cite{AlanBook}.) We can then take $t=n^{1/2+2\eta},\gamma_k=n^{-3}$ in the theorem to show that $W\leq 3\epsilon/j^{1/2}$ with the required probability.

\end{proof}
\section{Proof of Theorem \ref{main}}
First, create $G=G(n,p)$, and let $E$ be the set of edges in the giant component. Choose an ordering $e_1,\dotsc,e_{|E|}$ that satisfies the following:
\begin{itemize}
    \item All core edges come before all mantle edges
    \item The mantle edges are ordered in descending order by their number of descendents, that is, $desc(e_i)\ge desc(e_{i+1})$ for any $e_i,e_{i+1}\in M$.
\end{itemize}
Now, go in this order, and when you get to $e_i$, color it independently uniformly at random from $[\alpha n]$ (or technically, $[\lceil\alpha n\rceil]$, but we will omit ceilings for convenience). If the color of $e_i$ has already been used on some $e_j$ for $j<i$, delete $e_i$.

This process results in a rainbow colored graph that is a subgraph of the giant component. Our claim is that this new graph has a component of size $(1-O(\epsilon\log(1/\epsilon))2\epsilon n$, which would clearly prove the theorem, as any rainbow component has a rainbow spanning tree. In other words, we are allowed to delete at most $O(\epsilon^2\log(1/\epsilon)n)$ vertices from the giant component.

\begin{lemma}
After we process the first $5\epsilon^2n$ edges ($e_1,\dotsc,e_{5\epsilon^2n}$), w.h.p. the new graph $G'$ has a component of size $\approx2\epsilon n$.
\end{lemma}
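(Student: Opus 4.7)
The plan has three components. First, count the color collisions. The first $5\epsilon^2 n$ edges receive independent uniform colors from $[\alpha n]$, so by a Chebyshev bound on the birthday-paradox second moment, the number of collision-induced deletions among them is $O(\epsilon^4 n)$ with high probability, and each individual $e_i$ is deleted with probability at most $5\epsilon^2/\alpha$.

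Second, bound the mantle vertices disconnected from the core. A mantle vertex $v$ loses its connection to the core precisely when some mantle edge on its unique path to the core is deleted, so the total loss is at most $W := \sum_{i:\,e_i\in M} desc(e_i)\1[e_i \text{ deleted}]$. Linearity gives $\mb{E}[W]\leq (5\epsilon^2/\alpha)\sum_{e\in M}desc(e) = (5\epsilon^2/\alpha)\sum_{v\in M_V}depth(v)$, and by the Ding--Lubetzky--Peres construction the mantle is a disjoint union of $\approx 2\epsilon^2 n$ independent $\mu$-PGW trees, each with expected total depth $\mu/(1-\mu)^2 \approx 1/\epsilon^2$, so $\sum_v depth(v)\approx 2n$. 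Concentration of $\sum_v depth(v)$ around $2n$ follows from independence across trees and the Borel tail bounds of Lemma \ref{sqrttail}, and high-probability control of $W$ itself can be obtained via a Warnke-style martingale analogous to the proof of Lemma \ref{concentrated}, yielding $W = O(\epsilon^2 n)$ w.h.p.

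Third, handle the core. In the Ding--Lubetzky--Peres model the core is a kernel multigraph $\mc{K}$ on $O(\epsilon^3 n)$ vertices of degree $\geq 3$, with each kernel edge subdivided into a geometric$(1-\mu)$-length path. Deleting an interior edge of such a subdivision splits the path into two segments each still hanging off its kernel endpoint, so no core vertex is disconnected unless $\mc{K}$ itself loses connectivity. The kernel loses an edge only when an entire subdivision path is removed, and since deletions affect only an $O(\epsilon^2)$ fraction of core edges, this occurs for at most $O(\epsilon^5 n)$ length-$1$ paths in expectation; since $\mc{K}$ is w.h.p.\ 2-edge-connected, the resulting core-vertex loss is $o(\epsilon^2 n)$. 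Combining the three components produces a component of $G'$ of size at least $|V(C)|+|M_V|-O(\epsilon^2 n)\approx 2\epsilon n$.

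The hard part will be the concentration of $W$ in the second step: a single $desc(e_i)$ can be as large as $\max_e desc(e)=O(\log n/\epsilon^2)$, so Markov's inequality does not suffice. The Warnke-style argument succeeds here because the processed edges are exactly the top-$3\epsilon^2 n$ mantle edges by descendant count, whose joint distribution is tightly controlled by Lemma \ref{concentrated}, and because the total depth sum over independent $\mu$-PGW trees is itself concentrated in a manner that mirrors Lemma \ref{concentrated}'s proof.
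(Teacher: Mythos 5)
Your third step has a genuine gap, and it is the step your whole decomposition hinges on. The claim that ``the kernel loses an edge only when an entire subdivision path is removed'' is wrong: a kernel edge of $\mc{K}$ is severed as soon as \emph{any single} edge of its subdividing path is deleted (the two segments then hang off their respective kernel endpoints, so no path vertex is lost, but the kernel edge is gone). Since each kernel edge is subdivided into a path of expected length $1/(1-\mu)\approx 1/\epsilon$, and each core edge is deleted with probability up to $5\epsilon^2/\alpha$, each kernel edge is severed with probability $\Theta(\epsilon)$, i.e.\ $\Theta(\epsilon^4 n)$ kernel edges are lost, not $O(\epsilon^5 n)$. Two-edge-connectivity of $\mc{K}$ is then useless: it protects against one deletion, not against a positive fraction of the edges being removed, so the conclusion that the core-vertex (and attached path/PGW mass) loss is $o(\epsilon^2 n)$ does not follow. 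To rescue this you would need an expansion-type argument for the random min-degree-3 kernel showing that deleting a $\Theta(\epsilon)$ fraction of its edges only strands a small set of kernel vertices, together with control of the path and tree mass hanging off the stranded vertices (each kernel vertex carries $\Theta(1/\epsilon^2)$ giant-component vertices on average, so even $\Theta(\epsilon^4 n)$ stranded kernel vertices is borderline). None of this is in the paper and it is not routine. Your second step is in better shape but also not finished as written: Markov gives only a constant (in $n$) failure probability, so you do need the concentration of $W$ and of $\sum_v depth(v)$ that you defer to a ``Warnke-style'' argument; this is repairable (the summands are bounded by the maximum descendant count, which is $O(\log n/\epsilon^2)$ w.h.p., so an Azuma/Freedman bound over the $O(\epsilon^2 n)$ processed edges suffices), but it is an argument you still owe.

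For comparison, the paper's proof avoids every one of these issues with a one-paragraph coupling: when $e_i$ is colored, at most $i-1<5\epsilon^2 n$ colors are banned, so conditionally on everything that happened before, $e_i$ survives with probability at least $1-5\epsilon^2/\alpha$. Hence $G'$ can be coupled to contain a copy of $G(n,p')$ with $p'=p(1-5\epsilon^2/\alpha)=\frac{1+(1-O(\epsilon))\epsilon}{n}$, and Theorem \ref{giant} applied to $G(n,p')$ immediately yields a component of size $\approx 2\epsilon n$ inside $G'$. No separation into core and mantle, no kernel connectivity, and no concentration inequality is needed for this lemma; the fine structure of the mantle is only required later, for the edges with $i>5\epsilon^2 n$.
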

\begin{proof}
When each edge $e_i$ gets colored, there are less than $i$ already used colors, out of $\alpha n$ possible colors. Thus, for each edge, the probability that it gets deleted is less than $i/(\alpha n)$, so for every $i\le 5\epsilon^2n$, $e_i$ gets deleted with probability less than $5\epsilon^2/\alpha$. Thus, in this process, every edge in $G$ is kept with probability at least $1-\frac{5\epsilon^2}{\alpha}$, independently of what has happened to the previous edges. This gives a direct coupling from $G'$ to $G(n,p')$ where \[p'=p\left(1-\frac{5\epsilon^2}{\alpha}\right)=\frac{1+(1-O(\epsilon))\epsilon}{n}\] for fixed $\alpha$ and sufficiently small $\epsilon$. This coupling is such that every edge that appears in $G(n,p')$ also appears in $G'$. Then we apply Theorem \ref{giant} to say that $G(n,p')$ has a giant component of size $\approx 2\epsilon n$.
\end{proof}
\begin{lemma}\label{edgei}
With high probability, in the original graph $G$, we have that $e_i$ is in the mantle and $desc(e_i)\le 36(\epsilon n/i)^2$ for every $5\epsilon^2n\le i\le |E|$.
\end{lemma}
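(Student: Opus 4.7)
The claim has two parts for each $i \in [5\epsilon^2 n, |E|]$: (a) that $e_i$ lies in the mantle, and (b) that $desc(e_i) \le 36(\epsilon n/i)^2$. Part (a) is immediate from Lemma \ref{2core}: w.h.p.~$|E(C)| \le (1+o(1)+O(\epsilon))\cdot 2\epsilon^2 n \le 2.1\epsilon^2 n < 5\epsilon^2 n$ for small $\epsilon$, and the ordering places all core edges first. Writing $m_i := i - |E(C)|$ for the rank of $e_i$ among mantle edges in the descending-$desc$ ordering, we have $m_i \ge 0.58\, i$ whenever $i \ge 5\epsilon^2 n$.

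For part (b), the key observation is that the descending ordering makes $desc(e_i) > j$ equivalent to $D_j \ge m_i$. Setting $j_i := \lfloor 36(\epsilon n/i)^2 \rfloor$, it therefore suffices to show $D_{j_i} < m_i$. Lemma \ref{concentrated} applies to $j_i$: it is a positive integer (since $i \le |E| \lesssim 2\epsilon n < 6\epsilon n$ w.h.p., forcing $36(\epsilon n/i)^2 \ge 1$) and is bounded in $n$ by $36/(25\epsilon^2)$. The lemma yields $D_{j_i} \le 3\epsilon n/\sqrt{j_i}$ except on an event of probability less than $n^{-2}$. In the regime $\epsilon n/i \ge 0.4$ (which holds because $i \le 2.5\epsilon n$), one checks $\sqrt{j_i} \ge 5.4\epsilon n/i$, giving $3\epsilon n/\sqrt{j_i} \le 0.56\,i < m_i$, so $desc(e_i) \le j_i \le 36(\epsilon n/i)^2$.

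Finally, union-bound over $i \in [5\epsilon^2 n, |E|]$, a range of size $O(\epsilon n)$, to get total failure probability $O(\epsilon n) \cdot n^{-2} = o(1)$. The main (mild) obstacle is threading the constants so that the strict inequality $D_{j_i} < m_i$ survives the integer floor in $j_i$: the constant $36 = (2\cdot 3)^2$ is chosen to marry the factor $2$ in $m_i \ge i/2$ with the factor $3$ in Lemma \ref{concentrated}, and the slack $m_i \ge 0.58\, i$ (versus $i/2$), coming from $|E(C)|$ sitting well below the threshold $5\epsilon^2 n$, provides the room needed to absorb the $-1$ from the floor.
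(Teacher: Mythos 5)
Your proposal is correct and follows essentially the same route as the paper's proof: Lemma \ref{2core} to place $e_i$ in the mantle, Lemma \ref{concentrated} with $j\approx 36(\epsilon n/i)^2$ to bound the number of mantle edges of larger $desc$ below $e_i$'s rank in the descending ordering, and a union bound over the $O(\epsilon n)$ values of $i$. The only difference is cosmetic bookkeeping (the explicit floor on $j_i$ and the constant $0.58$ in place of the paper's $5\epsilon^2n/2+i/2\le i$ count), which does not change the argument.
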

\begin{proof}
Lemma \ref{2core} tells us that w.h.p.~for sufficiently small $\epsilon$, there are less than $5\epsilon^2n/2$ edges in the core and less than $5\epsilon n/2$ edges total,  which we can assume these throughout the proof. This implies that $e_i$ is in the mantle for every $5\epsilon^2n/2\le i\le |E|<5\epsilon n/2$.

Fix $5\epsilon^2n\le i\le |E|$. Noting that $5\epsilon^2n\le i$ implies that $36(\epsilon n/i)^2$ is $O(1)$ with respect to $n$, Lemma \ref{concentrated} tells us that with probability at least $1-n^{-2}$,
\[|\{e\in M:desc(e)>36(\epsilon n/i)^2\}|<\frac{3\epsilon n}{6(\epsilon n/i)}<\frac{i}{2}\] 
Thus with probability at least $1-n^{-2}$, there are less than $5\epsilon^2n/2+i/2\le i$ total edges that are in the core or have more than $36(\epsilon n/i)^2$ descendants, and thus $desc(e_i)\le36(\epsilon n/i)^2$.

Taking a union bound over all $5\epsilon^2n\le i\le |E|$ gives us that with probability at least $1-(5\epsilon n/2)(n^{-2})$, or w.h.p., we have that $desc(e_i)\le36(\epsilon n/i)^2$ for every $5\epsilon^2n\le i\le |E|$.
\end{proof}
Applying Lemma \ref{edgei} with $i=5\epsilon^2n$ tells us that in the new graph $G'$, as the core of $G'$ is a subset of the core of $G$, any uncolored edge is either in the mantle or has been disconnected from the core. If an uncolored edge is still in the mantle, it has the same number of descendants in $G'$ as it did in $G$, as all of its descendant edges are also uncolored (as they have fewer descendants). So for uncolored edges, we will still use $desc(e_i)$ to equal what it was in $G$, and then deleting any future $e_i$ disconnects at most $desc(e_i)$ vertices from the rainbow giant (specifically, $desc(e_i)$ if $e_i$ was previously in the mantle and 0 if it has already been disconnected). These properties remain true as the process continues.

Now, for every $5\epsilon^2n<i\le |E|$, let $X_i$ equal $desc(i)$ if $e_i$ is deleted and $X_i=0$ if $e_i$ is kept. Then note that the total number of vertices disconnected from the core in our entire process is at most $O(\epsilon^2n)+\sum_{i=5\epsilon^2n+1}^{|E|}X_i$.
\begin{lemma}\label{ExpDisconnect}
For every $5\epsilon^2n<i\le |E|$, $\mb{E}(X_i|X_1,\dotsc,X_{i-1})<36\epsilon^2n/(i\alpha)$ for every possible sequence $X_1,\dotsc,X_{i-1}$.
\end{lemma}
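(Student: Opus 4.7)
The plan is to bound $X_i$ by decomposing it as $desc(e_i)\cdot\1_{\{e_i\text{ deleted}\}}$ and controlling each factor separately. First I would invoke Lemma~\ref{edgei} so that, with high probability, we may assume $desc(e_i)\le 36(\epsilon n/i)^2$ for every $5\epsilon^2n\le i\le |E|$; since Lemma~\ref{ExpDisconnect} only needs to hold on this high-probability event (it will feed into a w.h.p.\ conclusion downstream), conditioning on it is essentially free. Crucially, $desc(e_i)$ is a deterministic function of the underlying graph $G$ and the fixed edge ordering, so it does not depend on the random coloring at all.

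The second step is to bound the conditional deletion probability. By construction, the color of $e_i$ is drawn independently and uniformly from $[\alpha n]$, independently of the colors assigned to $e_1,\dotsc,e_{i-1}$. The history $X_1,\dotsc,X_{i-1}$ is a measurable function of those previous colors together with the fixed graph $G$, so conditional on any realization of $X_1,\dotsc,X_{i-1}$, the color of $e_i$ is still uniform on $[\alpha n]$. Since the set of previously used colors has size at most $i-1$, the probability that $e_i$'s color collides with one of them is strictly less than $i/(\alpha n)$.

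Multiplying these two bounds yields
\[
\mb{E}(X_i\mid X_1,\dotsc,X_{i-1})\le desc(e_i)\cdot\frac{i}{\alpha n}\le 36\brac{\frac{\epsilon n}{i}}^2\cdot\frac{i}{\alpha n}=\frac{36\epsilon^2n}{i\alpha},
\]
as required. There is no serious obstacle here: the entire content of the lemma is the observation that $X_i$ splits cleanly into a deterministic size factor, controlled by the chosen ordering together with Lemma~\ref{edgei}, and a random deletion factor whose conditional probability is bounded trivially by the independent, uniform coloring rule.
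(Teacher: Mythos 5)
Your proposal is correct and follows essentially the same argument as the paper: bound $desc(e_i)$ by $36(\epsilon n/i)^2$ via Lemma~\ref{edgei}, note that conditional on any history the freshly drawn uniform color collides with the fewer than $i$ used colors with probability below $i/(\alpha n)$, and multiply. Your explicit remarks about conditioning on the high-probability event and the independence of the coloring from the history are just slightly more careful versions of what the paper leaves implicit.
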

\begin{proof}
Lemma \ref{edgei} gives that for every $5\epsilon^2n\le i\le |E|$, $desc(e_i)\le 36(\epsilon n/i)^2$. Then no matter the values of $X_1,\dotsc,X_{i-1}$, the probability that $e_i$ gets a banned color is less than $\frac{i}{\alpha n}$. Thus 
\[
\mb{E}(X_i|X_1,\dotsc,X_{i-1})<\frac{i}{\alpha n}\left(36\left(\frac{\epsilon n}{i}\right)^2\right)=\frac{36\epsilon^2n}{i\alpha}.
\]
\end{proof}
The proof of Theorem \ref{main} follows from the Chernoff-Hoeffding bound. Due to Lemma \ref{ExpDisconnect}, we can treat each $X_i$ as an independent random variables with mean $36\epsilon^2n/(i\alpha)$ and maximum value $36(\epsilon n/i)^2\le 2\epsilon^{-2}$. If $S=\sum_{i=5\epsilon^2n}^{|E|}X_i$ then, where $H_k=\sum_{j=1}^k1/j$,
\[
\mb{E}(S)=\sum_{i=5\epsilon^2n}^{|E|}\frac{36\epsilon^2n}{i\alpha}=\frac{36\epsilon^2n}{\alpha}(H_{|E|}-H_{5\epsilon^2n})<\frac{36\epsilon^2n}{\alpha}\left(\log\left(\frac{e^{1/2}|E|}{5\epsilon^2n}\right)\right)<\frac{36\epsilon^2\log(1/\epsilon)n}{\alpha}
\]
Then 
\[
\mb{P}\left(S-\mb{E}S>\frac{\epsilon^2\log(1/\epsilon)n}{\alpha}\right)<\exp\left(\frac{-2(\epsilon^2\ln(1/\epsilon)n/\alpha)^2}{(3\epsilon n)(2\epsilon^{-2})^2}\right)=\exp\left(\frac{-\epsilon^5\ln(1/\epsilon)n}{6\alpha^2}\right)=o(1)
\]
So w.h.p.~$S\le37\epsilon^2\log(1/\epsilon)n/\alpha$. Thus, the coloring ends with a rainbow giant component of size at least  $(1-37\epsilon\log(1/\epsilon)/\alpha-O(\epsilon))2\epsilon n$, completing the proof.
\subsection{Lower Bound on Vertices Lost}
The conjecture of Cooley, Do, Erde, and Missethan allows the rainbow tree to have $O(\epsilon^2n)$ fewer vertices than the uncolored component, while ours loses $O(\epsilon^2\log(1/\epsilon)n)$. One may wonder if there is a rainbow tree losing even fewer vertices, say $O(\epsilon^3n)$ or $o(n)$; however, the conjectured allowance is the best possible. We know from the Borel distribution and something analogous to Lemma \ref{concentrated} that there are w.h.p.~$\approx 2\epsilon n/e$ leaf vertices in the giant component. If we randomly order these $\approx 2\epsilon n/e$ leaf edges and color them in order with $\alpha n$ colors, each edge has probability $\approx\frac{\epsilon}{e\alpha}$ of being given a repeated color. So we will w.h.p.~need to delete $\gtrsim2\epsilon^2/e^2\alpha=\Theta(\epsilon^2n)$ leaves when going from the uncolored to rainbow giant component.
\bibliographystyle{plain}
\bibliography{GiantRainbow}
\end{document}